\newcommand{\vp}{\varepsilon}
\theoremstyle{plain}
\newtheorem{thm}{Theorem}
\newtheorem{cor}{Corollary}
\theoremstyle{definition}
\newtheorem{defn}{Definition}
\theoremstyle{remark}
\begin{document}

\title{Entropy production and folding of the phase space in chaotic dynamics}

\author{Eugen Mihailescu}
\date{}
\maketitle

\begin{abstract}
We study the entropy production of Gibbs (equilibrium) measures
for chaotic dynamical systems with folding of the phase space. The
dynamical chaotic model is that generated by a hyperbolic
non-invertible map $f$ on a general basic (possibly fractal) set
$\Lambda$; the non-invertibility creates new phenomena and
techniques than in the diffeomorphism case. We prove a formula for
the \textit{entropy production}, involving an asymptotic
logarithmic degree, with respect to the equilibrium measure
$\mu_\phi$ associated to the potential $\phi$. This formula helps
us calculate the entropy production of the measure of maximal
entropy of $f$. Next for hyperbolic toral endomorphisms, we prove
that all Gibbs states $\mu_\phi$ have \textit{non-positive entropy
production} $e_f(\mu_\phi)$. We study also the entropy production
of the \textit{inverse Sinai-Ruelle-Bowen measure} $\mu^-$ and
show that for a large family of maps, it is \textit{strictly
negative}, while at the same time the entropy production of the
respective (forward) Sinai-Ruelle-Bowen measure $\mu^+$ is
strictly positive.
\end{abstract}

\textbf{Mathematics Subject Classification 2000:} 37D35, 37D45, 82C05, 37A60, 82B05.

\textbf{Keywords:} Folding entropy, Jacobian of an invariant
probability, Gibbs states for hyperbolic non-invertible maps,
entropy production, SRB and inverse SRB measures, stationary
states.

\section{Entropy production. Outline of main results.}

In statistical mechanics, one concerns himself with the
\textit{stationary (steady) states}, which are probability
measures on the phase space, invariant under time evolution. The
study of such states can be done with the help of dynamical
systems and ergodic theory (for instance \cite{Bo}, \cite{DSS},
\cite{ER}, \cite{Ru-survey99}, \cite{Ru-folding}, \cite{S}, etc.)
The fundamental postulate of statistical mechanics (see
\cite{DSS}) says that a physical system in thermodynamical
equilibrium is described by Gibbs measures. In the nonequilibrium
scenario, once a system is kept out of equilibrium and subjected
to non-Hamiltonian forces, the energy is in general not conserved
(\cite{Ru-survey99}); so we couple it with a large external system
called a thermostat, and record the entropy changes. It is thus
justified to have a notion of \textit{entropy production}, as a
measure of the average differences in the entropy of the system
over time. Certain nonequilibrium steady states are described also
by Gibbs states, but for a different problem (see
\cite{Ru-survey99}). \ From a mathematical point of view, Ruelle
identifies in \cite{Ru-folding} (see also \cite{Ru-97} and
\cite{Ru-survey99}) several types of entropy productions given by:
i) a diffeomorphism $f$ of a manifold $M$; ii) an endomorphism $f$
on $M$, i.e a non-invertible smooth map $f$; here the folding of
$M$ by $f$ will itself contribute to the entropy production; iii)
a diffusion model given by a map $f$ restricted to a neighbourhood
of a compact invariant set $X \subset M$. Whether entropy
production of a state is positive or not, is not clear a priori.

In this paper we are concerned with the case when $f$ is a \textit{smooth endomorphism} on a manifold $M$, having a compact
invariant set $\Lambda \subset M$. Since hyperbolicity plays an important role in modelling time evolutions in statistical
 mechanics (for instance \cite{Ru-survey99}, \cite{Ru-folding}, etc.), we shall assume that the endomorphism $f$ is
 \textit{hyperbolic} in the sense of \cite{Ru-carte89}, i.e that there exists a continuous splitting of the tangent
 bundle over the inverse limit $\hat \Lambda$ ($\hat \Lambda$ being the space of past trajectories of points in
 $\Lambda$), into stable and unstable directions; $f$ is not assumed expanding. This implies that we have stable directions and local stable manifolds of
 type $W^s_r(x), x \in \Lambda$, and unstable directions and local unstable manifolds of type $W^u_r(\hat x),
 \hat x \in \hat \Lambda$. Thus through a given point $x$ there may pass many (even uncountably many) local unstable
  manifolds corresponding to different prehistories of $x$ in $\hat \Lambda$. This follows from the
   non-invertibility of $f$.

For systems given by Anosov diffeomorphisms, or for diffeomorphisms having a hyperbolic attractor, we have the existence of
Sinai-Ruelle-Bowen (SRB) measures, which are natural invariant measures in the sense that they describe the distribution of trajectories of Lebesgue-almost all points in a neighbourhood of the attractor. As was shown by Sinai, the
SRB measure of an Anosov diffeomorphism $f$, is in fact the Gibbs state of a Holder potential $\Phi^u$, where
$\Phi^u = -\log |\text{det} Df_u|$ (the \textit{unstable potential}).

For a $\mathcal{C}^2$ diffeomorphism $f$, the \textit{entropy
production} of an arbitrary $f$-invariant probability measure
$\mu$ is defined (see \cite{Ru-folding}) as $ e_f(\mu) = -\int
\log |\text{det}(Df)(x)| d\mu(x).$  If $\mu$ is an SRB state, then
Ruelle proved in \cite{Ru-folding} that $e_f(\mu) \ge 0$; moreover
if the SRB state $\mu$ has no vanishing Lyapunov exponents and if
$e_f(\mu) = 0$, then $\mu$ must be absolutely continuous with
respect to the Lebesgue measure (see \cite{Ru-folding},
\cite{Ru-survey99}; and \cite{L} for a characterization of
invariant absolutely continuous measures). SRB measures exist also
for diffeomorphisms having Axiom A attractors, as shown by
Ruelle (see for instance \cite{ER}). Moreover SRB measures do exist also for
Anosov endomorphisms and for endomorphisms with hyperbolic
attractors, and they are equal to the equilibrium measures of the respective unstable potentials on the inverse limit spaces (see \cite{QZ}).

For a \textit{non-invertible} smooth map $f$ on a Riemannian
manifold $M$ and an $f$-invariant probability $\mu$ on $M$,
Ruelle defined in \cite{Ru-folding} the \textit{entropy
production} of $\mu$ by:
\begin{equation}\label{entprod}
e_f(\mu) := F_f(\mu) - \int \log |\text{det}(Df)(x)|d\mu(x),
\end{equation}
where $F_f(\mu)$ is called the \textit{folding entropy} of $\mu$
with respect to $f$. $F_f(\mu)$ is defined as the conditional
entropy $H_\mu(\epsilon|f^{-1}\epsilon)$, of $\epsilon$ with
respect to $f^{-1}\epsilon$, where $\epsilon$ is the single point
partition.

For example we can obtain stationary measures
$\mu$, with respect to $f$, as weak limits (when $n \to \infty$) of averages of type
\begin{equation}\label{rho-n}
\frac 1n \mathop{\Sigma}\limits_{k = 0}^{n-1} f^k \rho,
\end{equation}
  where $\rho$ is an absolutely continuous probability with respect to Lebesgue measure,  with density $\bar \rho$.
  For such $f$-invariant limit measures $\mu$, Ruelle showed that the
   entropy production is non-negative (\cite{Ru-folding}).
There do exist in fact dynamical systems from physics presenting
non-invertibility (see for example \cite{ER}, \cite{Ru-folding},
\cite{Ru-survey99}). Hyperbolicity also appears to have physical
meaning and it may be used as an approximation for certain
physical phenomena (see for example \cite{Ru-survey99}). The study
of dynamics of hyperbolic endomorphisms, and of their
characteristics different from diffeomorphisms, appeared also in
\cite{Bot}, \cite{Ru-carte89}, \cite{M-MZ},
 \cite{M-Cam}, \cite{QS}, etc.

From the above, it is then justified to study the entropy
production of equilibrium measures of Holder potentials for
non-invertible smooth maps $f$ on basic sets $\Lambda$ on which
$f$ is hyperbolic and transitive; here by \textit{basic set} (or
locally maximal set \cite{KH}) we mean a compact $f$-invariant set
$\Lambda$ s.t $\Lambda = \mathop{\cap}\limits_{n \in \mathbb Z}
f^n(U)$, for a neighbourhood $U$ of $\Lambda$. Our endomorphism
$f$ is not assumed expanding. \ The \textbf{main results} of the
paper are the following:

In \textbf{Theorem \ref{Jacobian}} we give a precise estimate for
the Jacobian (in the sense of Parry, \cite{Pa}) of the equilibrium
measure $\mu_\phi$ associated to an arbitrary Holder potential
$\phi$, with respect to the iterate $f^n$. This estimate is
independent of $n$ and will allow us to express the
\textbf{folding entropy} of $\mu_\phi$ with respect to $f$. Next
we  will describe the folding entropy of $\mu_\phi$ as the limit
of the weighted integral, of the logarithm of the degree function
of $f^n$ with respect to $\mu_\phi$ on $\Lambda$. In this way in
\textbf{Theorem \ref{folding-ent}} we give a formula for the
entropy production of $\mu_\phi$ in terms of an "asymptotic
logarithmic degree" (with respect to $\mu_\phi$) minus the
integral of the Jacobian with respect to the Riemannian metric;
the asymptotic logarithmic degree takes into consideration only
those $n$-preimages (i.e preimages
 with respect to $f^n$) which behave well with respect to $\phi$.
 In \textbf{Corollary \ref{deg-ent}} we will use the formula proved in Theorem
 \ref{folding-ent} in order to calculate the folding entropy
 of the measure of maximal entropy.

We investigate next the case of a hyperbolic toral endomorphism on
$\mathbb T^k$ and its Gibbs measures associated to various Holder
potentials. We prove in \textbf{Corollary \ref{neg-ent}} that in
this setting, the entropy production of \textit{any} equilibrium
measure of a Holder potential is \textbf{non-positive}.

In \cite{M-JSP}, we introduced an \textbf{inverse SRB measure}
$\mu^-$ which has physical relevance since it gives the
distribution of past trajectories with respect to the endomorphism
$f$,  for Lebesgue almost all points in a neighbourhood of a
hyperbolic repellor. This unique inverse SRB measure is not just
the SRB measure for $f^{-1}$, since our map $f$ is non-invertible
in general. We proved that in fact $\mu^-$ is the equilibrium
measure of the stable potential (with respect to the
\textit{forward} system), and that it is the only invariant
probability having absolutely continuous conditional measures on
local stable manifolds. \ Here we will show in \textbf{Theorem
\ref{ent-tor}} that for perturbations of hyperbolic toral
endomorphisms,  the entropy production of $\mu^-$ is strictly
negative unless $\mu^-$ is equal to the (forward) SRB measure
$\mu^+$ of the endomorphism $f$, in which case both are absolutely continuous.  In \textbf{Corollary \ref{exp}
a)} we show that most maps in a neighbourhood of a hyperbolic
toral endomorphism have inverse SRB measures with negative entropy
production. And we actually construct in Corollary \ref{exp} b) a
family of perturbations of hyperbolic toral endomorphisms, whose
respective inverse SRB measures have \textbf{negative entropy
production}. In particular an endomorphism with negative entropy
production, will not be a stationary measure obtained as a weak
limit of averages of iterates of absolutely continuous measures as
in (\ref{rho-n}). In this way we find certain chaotic (hyperbolic)
systems with folding of phase space, and Gibbs states for them
having negative entropy production.

Several interesting and important results from statistical physics
point towards the profound relationship between entropy production
and the time arrow/irreversibility, and also the possibility of
negative entropy production on short time scales (for exp.
\cite{DSS}, \cite{ES}, \cite{Leb}, \cite{Ru-survey99}, \cite{W},
etc.) However our results are abstract mathematical ones, and we do not
investigate here possible physical implications, if any.

\section{Main results and proofs.}

For the rest of the paper let us fix a smooth (say
$\mathcal{C}^2$) non-invertible map $f:M \to M$ defined on a
compact Riemannian manifold and let $\Lambda$ be a fixed basic set
of $f$, i.e there exists some neighbourhood $U$ of $\Lambda$ with
$\Lambda = \mathop{\cap}\limits_{n \in \mathbb Z} f^n(U)$. Assume
also that $f$ is \textbf{transitive and hyperbolic} on $\Lambda$.
Sometimes the set $\Lambda$ may be the whole manifold as in the
case of Anosov endomorphisms (for example for hyperbolic toral
endomorphisms). However in general $\Lambda$ may not be totally
invariant, i.e we do not always have $f^{-1}(\Lambda) = \Lambda$.

Here hyperbolicity is understood in the sense of
\textit{endomorphisms} (i.e non-invertible maps) (see
\cite{Ru-carte89}), i.e there exists a continuous splitting of the
tangent bundle into stable and unstable directions, over the
inverse limit $\hat \Lambda$ consisting of sequences of
consecutive preimages,  $$\hat \Lambda = \{\hat x = (x,
x_{-1}, x_{-2}, \ldots,) \ \text{with} \ x_{-i} \in \Lambda,
f(x_{-i}) = x_{-i+1}, i \ge 1\}$$ For any element $\hat x = (x,
x_{-1}, x_{-2}, \ldots) \in \hat \Lambda$ we have a stable
direction $E^s_x$ (which depends only on $x$) and an unstable
direction $E^u_{\hat x}$. Consequently there exists a small $r >0$
so that we can construct local stable and local unstable
manifolds, $W^s_r(x)$ and $W^u_r(\hat x)$ for any $\hat x \in \hat
\Lambda$. We shall also denote
\begin{equation}\label{DSU}
Df_s(x) := Df|_{E^s_x}, \ x \in \Lambda \ \text{and} \ Df_u(\hat
x) := Df|_{E^u_{\hat x}}, \ \hat x \in \hat \Lambda
\end{equation}

The endomorphism $f$ is assumed to have stable directions too, so it is non-expanding. \ More about hyperbolicity for endomorphisms can be found in
\cite{Ru-carte89}, \cite{M-DCDS06}, etc. When the map is not
invertible, there appear significantly different phenomena and
different techniques than in the case of diffeomorphisms (as for
example in \cite{Bot}, \cite{Ru-survey99}, \cite{M-MZ}, \cite{M-Cam}, etc.)

 We will use in the sequel the notions of
\textit{Jacobian of an invariant measure} introduced by Parry in
\cite{Pa}. Let $f:M \to M$ be a smooth endomorphism on the manifold
$M$ and $\mu$ an $f$-invariant probability on $M$ (whose support
may be smaller than $M$); assume also that $f$ is at most
countable-to-one. Then as shown by Rohlin (\cite{Ro}, \cite{Pa}),
there exists a measurable partition $\xi = (A_0, A_1, \ldots)$ so
that $f$ is injective on each $A_i$. It was proved that the
push-forward measure $((f|_{A_i})^{-1})_*\mu$ is absolutely
continuous on $A_i$ with respect to $\mu$; so it makes sense to
define (as in \cite{Pa}) the respective Radon-Nykodim derivative,
which will be called the \textbf{Jacobian} of $\mu$ with respect
to $f$:

$$ J_f(\mu)(x) = \frac{d \mu \circ (f|_{A_i})}{\mu} (x), \
\mu-\text{a.e on} \ A_i, i \ge 0$$

Notice that $J_f(\mu)(x) \ge 1, \mu-\text{a.e} \ x$. We have also
a Chain Rule when dealing with a composition of maps, namely
$$J_{f\circ g}(\mu) = J_f(g_*\mu) J_g(\mu)$$

\begin{defn}\label{comparable}
Given two positive quantities $Q_1(n, x), Q_2(n, x)$, we will say
that they are \textbf{comparable} if there exists a positive
constant $C$ so that $\frac 1C \le \frac{Q_1(n, x)}{Q_2(n, x)} \le
C$ for all $n, x$.
\end{defn}

Recall also (for example from \cite{KH}) that, given an expansive
homeomorphism $f:X \to X$ on a compact metric space, having the
specification property, the equilibrium measure $\mu_\phi$ of the
Holder potential $\phi$ satisfies $A_\vp e^{S_n\phi(x) - nP(\phi)}
\le \mu_\phi(B_n(x, \vp)) \le B_\vp e^{S_n\phi(x)-nP(\phi)}$,
where $B_n(x, \vp):= \{y \in X, d(f^i y, f^i x) < \vp, i - 0,
\ldots, n-1\}$, $P(\phi)$ denotes the topological pressure of
$\phi$ with respect to $f$, and where the positive constants
$A_\vp, B_\vp$ are independent of $x, n$. The general
homeomorphism framework above allows us to apply this result to
equilibrium measures on the inverse limit $\hat \Lambda$. If $\pi:
\hat \Lambda \to \Lambda, \pi(\hat x):= x, \hat x \in \hat
\Lambda$ is the \textit{canonical projection} and if $\phi$ is a
Holder potential on $\Lambda$, then $\mu_\phi$ is the unique
equilibrium measure for $\phi$ on $\Lambda$ if and only if
$$\mu_\phi = \pi_*\mu_{\phi\circ\pi},$$ where $\mu_{\phi\circ\pi}$
is the unique equilibrium measure of $\phi\circ\pi$ on the compact
metric space $\hat \Lambda$; here the homeomorphism $\hat f : \hat
\Lambda \to \hat \Lambda$ is the shift map defined by  $\hat f(x,
x_{-1}, x_{-2}, \ldots) = (f(x), x, x_{-1}, \ldots)$. So we obtain
for the non-invertible map $f$ and the equilibrium measure
$\mu_\phi$ the same estimate as above: $$A_\vp e^{S_n\phi(x) -
nP(\phi)} \le \mu_\phi(B_n(x, \vp)) \le B_\vp
e^{S_n\phi(x)-nP(\phi)},$$ with positive constants $A_\vp, B_\vp$
independent of $n, x$.

\begin{thm}\label{Jacobian}
Let $f$ be a smooth hyperbolic endomorphism on a folded basic set
$\Lambda$, which has no critical points in $\Lambda$; let also
$\phi$ a Holder continuous potential on $\Lambda$ and denote by
$\mu_\phi$ the unique equilibrium measure of $\phi$ on $\Lambda$.
Then for all $m \ge 1$, the Jacobian of $\mu_\phi$ w.r.t $f^m$ is
comparable to the ratio $\frac{\mathop{\sum}\limits_{\zeta\in
f^{-m}(f^m(x))\cap \Lambda} e^{S_m\phi(\zeta)}}{e^{S_m\phi(x)}}$,
i.e there exists a comparability constant $C >0$ (independent of
$m, x$) s.t for $\mu_\phi-\text{a.e} \ x \in \Lambda$:
\begin{equation}\label{jac}
C^{-1} \cdot \frac{\mathop{\sum}\limits_{\zeta\in f^{-m}(f^m(x)) \cap \Lambda}
e^{S_m\phi(\zeta)}}{e^{S_m\phi(x)}} \le J_{f^m}(\mu_\phi)(x) \le C
\cdot \frac{\mathop{\sum}\limits_{\zeta\in f^{-m}(f^m(x))\cap \Lambda}
e^{S_m\phi(\zeta)}}{e^{S_m\phi(x)}}, \
\end{equation}
\end{thm}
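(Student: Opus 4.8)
The plan is to estimate the Jacobian $J_{f^m}(\mu_\phi)$ by relating it to the measures of small dynamical balls and then passing to preimages. The key tool is the Gibbs estimate $A_\vp e^{S_n\phi(x)-nP(\phi)} \le \mu_\phi(B_n(x,\vp)) \le B_\vp e^{S_n\phi(x)-nP(\phi)}$ recalled just before the statement, which holds for the non-invertible map $f$ because $\mu_\phi = \pi_*\mu_{\phi\circ\pi}$. Since $\Lambda$ has no critical points, $f$ (hence $f^m$) is locally injective on small enough balls, and the preimage set $f^{-m}(f^m(x))\cap\Lambda$ is finite with cardinality bounded by the degree. The Jacobian $J_{f^m}(\mu_\phi)(x)$ is, by Parry's definition via the Radon–Nikodym derivative on an injectivity partition $\xi$, essentially the local expansion factor of $\mu_\phi$ under $f^m$; concretely it should behave like the ratio of $\mu_\phi$ of a small ball around $x$ to $\mu_\phi$ of its $f^m$-image.

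First I would fix $m$ and a small $\vp>0$ (smaller than the injectivity radius permitted by the absence of critical points), and write the Jacobian as a limit of ratios $\frac{\mu_\phi(B(x,\delta))}{\mu_\phi(f^m(B(x,\delta)))}$ as $\delta\to 0$, using that $f^m|_{B(x,\delta)}$ is injective so that the local inverse branch is well-defined. Next I would invoke the fact that $f^m(B_{n+m}(x,\vp))$ is comparable to $B_n(f^m x,\vp)$ for the Bowen balls (the image of an $(n+m)$-Bowen ball under $f^m$ is an $n$-Bowen ball around $f^m x$, up to the fixed-scale distortion controlled by $\vp$). Applying the Gibbs bounds to both numerator and denominator, the pressure terms $nP(\phi)$ cancel appropriately and the $\vp$-constants $A_\vp,B_\vp$ collapse into a single comparability constant $C$. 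The sum over $\zeta\in f^{-m}(f^mx)\cap\Lambda$ of $e^{S_m\phi(\zeta)}$ arises because the total $\mu_\phi$-measure near $f^m x$ redistributes over all $m$-preimages, and each preimage branch contributes a term weighted by $e^{S_m\phi(\zeta)}$ via the Gibbs estimate applied branch by branch.

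Concretely, the mechanism is that $\mu_\phi(B_n(f^m x,\vp))$ is comparable to $\sum_{\zeta\in f^{-m}(f^mx)\cap\Lambda}\mu_\phi(B_{n+m}(\zeta,\vp'))$ for a suitable scale $\vp'$, since the $(n)$-Bowen ball around $f^m x$ pulls back under $f^{-m}$ to a disjoint union of $(n+m)$-Bowen balls, one around each preimage $\zeta$ (this is exactly where local injectivity and the finiteness of the preimage set are used). Dividing $\mu_\phi(B_{n+m}(x,\vp'))$ by this sum and inserting the two-sided Gibbs estimates, the exponentials $e^{S_{n+m}\phi}$ split as $e^{S_m\phi}e^{S_n\phi\circ f^m}$ and the $S_n$-part is common to all branches, so it cancels, leaving exactly the ratio $\frac{\sum_\zeta e^{S_m\phi(\zeta)}}{e^{S_m\phi(x)}}$ up to the uniform constant $C$. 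Since $C$ depends only on $A_\vp,B_\vp$ and the bounded-distortion constant of $\phi$ over $m$ steps at fixed scale $\vp$, it is independent of both $m$ and $x$, which gives the claimed uniformity.

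The main obstacle I expect is making the pull-back of Bowen balls precise: I must control that the union of the $(n+m)$-balls around the preimages $\zeta$ genuinely covers (and is covered by, up to fixed overlap multiplicity) the pull-back of the $n$-ball around $f^m x$, with constants independent of $m$. This requires a uniform local-injectivity and distortion estimate that does not degrade as $m$ grows — the delicate point being that the number of preimages can grow with $m$ while the comparability constant must stay fixed. The absence of critical points in $\Lambda$ guarantees a uniform lower bound on the injectivity scale of each single branch, and hyperbolicity gives uniform bounded distortion of $S_m\phi$ along branches at a fixed scale; combining these so that the per-step constants telescope into a single $m$-independent $C$ is the technical heart of the argument.
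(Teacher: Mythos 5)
There is a genuine gap at the core of your argument. You derive the branch-by-branch comparison of measures only for Bowen balls $B_{n+m}(\zeta,\vp)$, via the Gibbs estimate, and then want to read off the Radon--Nikodym derivative $J_{f^m}(\mu_\phi)=d(\mu_\phi\circ f^m)/d\mu_\phi$ by letting the sets shrink to $x$. But in this paper's setting $f$ is hyperbolic and explicitly \emph{not} expanding: it has genuine stable directions, so for fixed $\vp$ the forward Bowen balls $B_n(x,\vp)$ do not shrink to the point $x$ as $n\to\infty$ --- they shrink to a piece of the local stable set $W^s_\vp(x)$. Hence they do not form a differentiation basis, and knowing that the holonomy $f^{-m}_\zeta\circ f^m$ multiplies the $\mu_\phi$-measure of Bowen balls by roughly $e^{S_m\phi(\zeta)-S_m\phi(x)}$ does not let you conclude the same for the arbitrary small sets (e.g.\ metric balls $B(x,\delta)$ with $\delta\to 0$) that your Lebesgue-differentiation step actually requires. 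Shrinking $\vp$ as well does not repair this, since the Gibbs constants $A_\vp,B_\vp$ are not uniform in $\vp$. In the expanding case your argument would be the standard one; here it breaks exactly where the hypotheses of the theorem depart from expanding maps. (The obstacle you flag --- uniformity in $m$ of the pull-back of Bowen balls --- is real but secondary; the missing ingredient is the passage from Bowen balls to arbitrary Borel sets.)

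The paper's proof supplies precisely this missing ingredient by a different mechanism: it proves the comparison $\mu_\phi(E_1)\approx \mu_\phi(E_2)\,e^{S_m\phi(y_1)-S_m\phi(y_2)}$ for \emph{arbitrary} Borel sets $E_1\subset B_m(y_1,\vp)$, $E_2\subset B_m(y_2,\vp)$ with $f^m(E_1)=f^m(E_2)$ and boundaries of $\mu_\phi$-measure zero. This is obtained not from the Gibbs property on Bowen balls but from the approximation of $\mu_\phi$ by the normalized periodic-orbit sums $\tilde\mu_n$, together with the specification property, which sets up a boundedly-many-to-one shadowing correspondence between $\mathrm{Fix}(f^n)\cap E_1$ and periodic points near $E_2$, with H\"older control of $S_n\phi$ along the correspondence. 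Only with that comparison in hand for arbitrary small sets is the final differentiation step legitimate. To rescue your route you would need either to work on the inverse limit $\hat\Lambda$ with two-sided Bowen balls (which do shrink to points) and then control the projection $\pi_*$, or to analyze separately how the conditional measures of $\mu_\phi$ on local stable manifolds transform under the holonomy; either way, an idea beyond the one-sided Gibbs estimate is needed.
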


\begin{proof}
We know from definition that the Jacobian $J_{f^m}(\mu_\phi)$ is
the Radon-Nikodym derivative of $\mu_\phi\circ f^m$ with respect
to $\mu_\phi$ on sets of injectivity for $f^m$. In order to estimate the Jacobian of $\mu_\phi$
with respect to $f^m$, we have to compare the measure $\mu_\phi$
on different components of the preimage set $f^{-m}(B)$, for a
small borelian set $B$, where $m\ge 1$ is fixed. Let us consider
two subsets $E_1, E_2$ of $\Lambda$ so that $f^m(E_1) = f^m(E_2)
\subset B$ and $E_1, E_2$ belong to two disjoint balls $B_m(y_1,
\vp)$, respectively $B_m(y_2, \vp)$. This happens if the diameter
of $B$ is small enough, since $f$ has no critical points in
$\Lambda$ and thus there exists a positive distance $\vp_0$
between any two different preimages from $f^{-1}(y)$ for $y \in
\Lambda$.

As in \cite{KH}, since the borelian sets with boundaries of
measure zero form a sufficient collection, we can assume that each
of the sets $E_1, E_2$ have boundaries of $\mu_\phi$-measure zero.
We recall that $f^m(E_1) = f^m(E_2)$. But as in \cite{KH},
$\mu_\phi$ is the limit of the sequence of measures: $$\tilde
\mu_n:= \frac {1}{P(f, \phi, n)} \cdot \mathop{\sum}\limits_{x \in
\text{Fix}(f^n) \cap \Lambda} e^{S_n\phi(x)} \delta_x,$$ where $P(f, \phi, n):=
\mathop{\sum}\limits_{x \in \text{Fix}(f^n)\cap \Lambda} e^{S_n\phi(x)}, n \ge
1$. So we obtain
\begin{equation}\label{ai}
\tilde \mu_n(E_1) = \frac {1}{P(f, \phi, n)} \cdot
\mathop{\sum}\limits_{x \in \text{Fix}(f^n) \cap E_1}
e^{S_n\phi(x)}, n \ge 1
\end{equation}

Let us now consider a periodic point $x \in \text{Fix}(f^n) \cap
E_1$; it follows that $f^m(x) \in
f^m(E_1)$, so there exists a point $y \in E_2$ such that $f^m(y) =
f^m(x)$. However the point $y$ is not necessarily periodic. Hence we will use the Specification Property
(\cite{KH}, \cite{Bo}) on hyperbolic locally maximal sets
in order to approximate $y$ with a periodic point whose orbit
follows that of $y$ for sufficiently long time. Indeed if $\vp>0$
is fixed, there exists a constant $M_\vp>0$ such that for all
$n > M_\vp$, there is a point $z \in \text{Fix}(f^n) \cap \Lambda$ which
$\vp$-shadows the $(n-M_\vp)$-orbit of $y$. In particular $z \in B_m(y_2,
2\vp)$, since $E_2 \subset B_m(y_2, \vp)$.

Let now $V\subset B_m(y_2, \vp)$ be an arbitrary neighbourhood of the set $E_2$.  Let
us take two points $x, x' \in \text{Fix}(f^n)\cap E_1$ and
assume the same periodic point $z \in V \cap \text{Fix} (f^n)$
corresponds to both of them through the previous shadowing procedure. Thus
the $(n- M_\vp-m)$-orbit of $f^m(z)$ $\vp$-shadows the
$(n-M_\vp-m)$-orbit of $f^m(x)$ and also the $(n-M_\vp-m)$-orbit
of $f^m(x')$. Thus the $(n-M_\vp-m)$-orbit of $f^m(x)$
$2\vp$-shadows the $(n-M_\vp-m)$-orbit of $f^m(x')$. But
recall that we took $x,  x' \in E_1 \subset B_m(y_1, \vp)$, so $x' \in B_m(x, 2\vp)$ and  hence from above, $x' \in B_{n-M_\vp}(x, 2\vp)$. \
We will partition now the set $B_{n-M_\vp}(x, 2\vp)$ in at most
$N_\vp$ smaller Bowen balls of type $B_n(\zeta, 2\vp)$. In each of
these $(n, 2\vp)$-Bowen balls we may have at most one fixed point
for $f^n$. Indeed, fixed points for $f^n$ are solutions to the
equation $f^n \xi = \xi$ and $Df^n$ does not have unitary
eigenvalues. Then if $d(f^i \xi, f^i \zeta) < 2\vp, i = 0,
\ldots, n-1$ and if $\vp$ is small enough, we can apply the
Inverse Function Theorem at each step, and thus there exists
only one fixed point for $f^n$ in the Bowen ball $B_n(\zeta,
2\vp)$. \ So there may exist at most $N_\vp$ periodic points in $\Lambda$ from
$\text{Fix}(f^n) \cap E_1$ having the same point $z \in V \cap \text{Fix}(f^n)$
associated to them by the above shadowing correspondence.

Let us notice also that if $x,  x' \in \text{Fix}(f^n) \cap E_1$ have the
same point $z\in V$ attached to them, then as seen before, $x' \in B_{n-M_\vp}(x, 2\vp)$ and then, from the Holder continuity
of $\phi$, $$|S_n\phi(x) - S_n\phi(x')| \le \tilde C_\vp,$$
for some positive constant $\tilde C_\vp$ depending on $\phi$ (but
independent of $n, m, x$). This can be used then in the estimate for
$\tilde \mu_n(E_1)$, from (\ref{ai}). Notice also that, if
$z \in B_{n-M_\vp}(y, \vp)$, then $f^m(z) \in
B_{n-M_\vp-m}(f^m(x), \vp)$. Thus from the Holder continuity of $\phi$ and the fact that $x \in E_1
\subset B_m(y_1, \vp)$, it follows that there exists a positive constant
$\tilde C_\vp'$ satisfying:
\begin{equation}\label{Sn}
|S_n \phi(z) - S_n\phi(x)| \le |S_m\phi(y_1) - S_m\phi(y_2)| +
\tilde C_\vp', \ \text{for} \ n > n(\vp, m).
\end{equation}

Then from (\ref{Sn}), (\ref{ai}), and since there are at most
$N_\vp$ points $x \in \text{Fix}(f^n) \cap E_1$ having the same $z \in V
\cap \text{Fix}(f^n) \cap \Lambda$ corresponding to them, we obtain that there
exists a constant $C_\vp>0$ s.t:
\begin{equation}\label{ineq1}
\tilde \mu_n(E_1) \le C_\vp \tilde \mu_n(V) \cdot
\frac{e^{S_m\phi(y_1)}}{e^{S_m\phi(y_2)}},
\end{equation}
where we recall that $E_1 \subset B_m(y_1, \vp), E_2 \subset B_m(y_2, \vp)$ and $f^m(E_1) = f^m(E_2)$. But $\partial E_1,
\partial E_2$ were assumed of $\mu_\phi$-measure zero, hence: $$
\mu_\phi(E_1) \le C_\vp \mu_\phi(V)
\cdot \frac{e^{S_m\phi(y_1)}}{e^{S_m\phi(y_2)}}$$

Recall now that $V$ was chosen arbitrarily as a neighbourhood of
$E_2$, and by applying the same procedure for $E_1$ instead
of $E_2$ we obtain the estimates:
\begin{equation}\label{comparison}
\frac{1}{C} \mu_\phi(E_2)
\frac{e^{S_m\phi(y_1)}}{e^{S_m\phi(y_2)}} \le \mu_\phi(E_1)  \le
C \mu_\phi(E_2) \frac{e^{S_m\phi(y_1)}}{e^{S_m\phi(y_2)}},
\end{equation}
where $C>0$ does not depend on $m, E_1, E_2$.

Now the Jacobian $J_{f^m}(\mu_\phi)$ is the Radon-Nikodym
derivative of $\mu_\phi \circ f^m$ with respect to $\mu_\phi$ on
sets of injectivity for $f^m$, hence $$\mu_\phi(f^m(D)) = \int_D
J_{f^m}(\mu_\phi)(x)d\mu_\phi(x), $$ for any borelian set $D$ on
which $f^m$ is injective. And on the other hand from the
invariance of $\mu_\phi$, we have $\mu_\phi(f^m(D)) =
\mu_\phi(f^{-m}(f^m D))$. Thus from (\ref{comparison}), the fact
that $|S_m\phi(\zeta) - S_m\phi(y)| \le \tilde C_\vp$ for $\zeta
\in B_m(y, \vp)$ and from the Lebesgue Derivation Theorem, it
follows that the Jacobian of $\mu_\phi$ satisfies:
$$J_{f^m}(\mu_\phi)(x) \approx
\frac{\mathop{\sum}\limits_{\zeta\in f^{-m}(f^m(x))\cap \Lambda}
e^{S_m\phi(\zeta)}}{e^{S^m\phi(x)}}, \ \mu_\phi-\text{a.e} \ x \in
\Lambda,$$ where the comparability constant $C>0$ is independent
of $m>1, x \in \Lambda$.

\end{proof}

Let us give now the definition of the folding entropy and the entropy production according to Ruelle, \cite{Ru-folding}.

\begin{defn}\label{fe}
Let $f:M \to M$ be a smooth endomorphism and
$\mu$ an $f$-invariant probability on $M$, then the \textbf{folding
entropy} $F_f(\mu)$ of $\mu$ is the conditional entropy: $$F_f(\mu) :=
H_\mu(\epsilon|f^{-1}\epsilon),$$ where $\epsilon$ is the
partition into single points. Also define the \textbf{entropy
production} of $\mu$ by: $$e_f(\mu):= F_f(\mu) - \int \log
|\text{det} Df(x)| d\mu(x)$$
\end{defn}

From \cite{Ro} it follows that we can use the measurable single
point partition $\epsilon$ in order to desintegrate the invariant
measure $\mu$ into a canonical family of conditional measures
$\mu_x$ supported on the finite fiber $f^{-1}(x)$ for $\mu$-a.e
$x$. Thus the entropy of the conditional measure of $\mu$ restricted to
$f^{-1}(x)$ is $H(\mu_x) = -\Sigma_{y \in f^{-1}(x)} \mu_x(y) \log
\mu_x(y)$. From \cite{Pa} we have also $$J_f(\mu)(x) =
\frac{1}{\mu_{f(x)}(x)}, \ \mu-\text{a.e} \ x,$$  hence we obtain
that
\begin{equation}\label{fold}
F_f(\mu) = \int \log J_f(\mu)(x) d \mu(x)
\end{equation}

Let us return now to the case of a hyperbolic basic set $\Lambda$ for a smooth endomorphism $f$ and consider a Holder
potential $\phi$ on $\Lambda$, with its unique equilibrium measure $\mu_\phi$.
We will give a formula for the folding entropy of the equilibrium measure $\mu_\phi$ in terms of an
"asymptotic logarithmic degree" with respect to $\mu_\phi$. This will take into account
the $n$-preimages of points which behave well (are generic) with respect to $\mu_\phi$.
To this end, for an $f$-invariant probability (borelian) measure $\mu$ on $\Lambda$ let us define, for any small
$\tau>0$,  $n >0$ integer and $x \in \Lambda$ the set

\begin{equation}\label{G}
G_n(x, \mu, \tau):= \{y \in f^{-n}(f^nx) \cap \Lambda, \ \text{s.t} \
|\frac{S_n\phi(y)}{n} - \int \phi d\mu| < \tau\},
\end{equation}
where $S_n\phi(y) := \phi(y) + \ldots + \phi(f^{n-1}y), y \in
\Lambda$ is the consecutive sum of $\phi$ on $y$.

\begin{defn}\label{dn}
In the above setting, denote by $d_n(x, \mu, \tau):= \text{Card} G_n(x, \mu, \tau), x \in \Lambda, n >0, \tau >0$.
The function $d_n(\cdot, \mu, \tau)$ is measurable, nonnegative and finite on $\Lambda$.
\end{defn}

\begin{thm}\label{folding-ent}
Let $f:M \to M$ be a smooth endomorphism and $\Lambda$ a basic set
for $f$ so that $f$ is hyperbolic on $\Lambda$ and does not have
critical points in $\Lambda$. Let also $\phi$ a Holder continuous
potential on $\Lambda$ and $\mu_\phi$ the equilibrium measure
associated to $\phi$. Then we have the following formula for the
folding entropy of $\mu_\phi$: $$F_f(\mu_\phi) =
\mathop{\lim}\limits_{\tau \to 0} \mathop{\lim}\limits_{n \to
\infty} \frac 1n \int_\Lambda \log d_n(x, \mu_\phi, \tau)
d\mu_\phi(x)$$
\end{thm}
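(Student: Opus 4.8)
The plan is to connect the folding entropy formula $F_f(\mu_\phi) = \int \log J_f(\mu_\phi)\,d\mu_\phi$ from equation~(\ref{fold}) with the asymptotic count of well-behaved $n$-preimages. The strategy rests on the Chain Rule for Jacobians and the Jacobian estimate from Theorem~\ref{Jacobian}. First I would use the Chain Rule together with the $f$-invariance of $\mu_\phi$ to show that $F_{f^n}(\mu_\phi) = n \cdot F_f(\mu_\phi)$; indeed, since $J_{f^n}(\mu_\phi) = \prod_{k=0}^{n-1} J_f(\mu_\phi)\circ f^k$ and each $f_*^k \mu_\phi = \mu_\phi$, integrating $\log J_{f^n}(\mu_\phi)$ against $\mu_\phi$ gives exactly $n \int \log J_f(\mu_\phi)\,d\mu_\phi = n F_f(\mu_\phi)$. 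Hence it suffices to prove
\begin{equation}\label{planeq}
F_f(\mu_\phi) = \lim_{\tau \to 0}\lim_{n\to\infty} \frac 1n \int_\Lambda \log J_{f^n}(\mu_\phi)(x)\,d\mu_\phi(x)
\end{equation}
and then to replace the full Jacobian by the count $d_n(x,\mu_\phi,\tau)$.

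Next I would invoke Theorem~\ref{Jacobian}, which says $J_{f^n}(\mu_\phi)(x)$ is comparable, with an $n$-independent constant, to $\big(\sum_{\zeta \in f^{-n}(f^nx)\cap\Lambda} e^{S_n\phi(\zeta)}\big)/e^{S_n\phi(x)}$. Taking logarithms and dividing by $n$, the comparability constant contributes $O(1/n) \to 0$, so the $\frac 1n \log$ of the Jacobian agrees asymptotically with $\frac 1n \log \sum_{\zeta} e^{S_n\phi(\zeta)} - \frac 1n S_n\phi(x)$. The second term integrates, by the Birkhoff ergodic theorem and $f$-invariance, to $\int \phi\,d\mu_\phi$. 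For the first term, I would split the preimage sum $\sum_{\zeta \in f^{-n}(f^nx)\cap\Lambda} e^{S_n\phi(\zeta)}$ according to whether $\zeta$ lies in $G_n(x,\mu_\phi,\tau)$ or not. On the good set $G_n$, each summand satisfies $S_n\phi(\zeta) = n\int\phi\,d\mu_\phi + n\cdot o_\tau(1)$ by definition~(\ref{G}), so the good part of the sum is comparable to $d_n(x,\mu_\phi,\tau)\, e^{n\int\phi\,d\mu_\phi + n\eta(\tau)}$ with $\eta(\tau)\to 0$. After dividing the logarithm by $n$, this yields $\frac 1n \log d_n(x,\mu_\phi,\tau) + \int\phi\,d\mu_\phi + \eta(\tau)$, whose integral is precisely the right-hand side of the claimed formula up to the $\tau \to 0$ correction.

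\emph{The main obstacle} is controlling the contribution of the \emph{bad} preimages $\zeta \notin G_n(x,\mu_\phi,\tau)$ and showing it does not dominate the logarithm of the sum. This is the crux: a priori there could be exponentially many atypical preimages whose values $S_n\phi(\zeta)$ are large enough that $\sum_{\text{bad}} e^{S_n\phi(\zeta)}$ swamps the good part. To rule this out I would use a large-deviations estimate for the equilibrium state: the measure $\mu_\phi$ of the set of points whose Birkhoff averages $\frac 1n S_n\phi$ deviate from $\int\phi\,d\mu_\phi$ by more than $\tau$ decays exponentially, with rate governed by the variational/pressure gap, and a preimage-counting version of this (via the $B_n(x,\vp)$-measure estimates recorded before Theorem~\ref{Jacobian}, $\mu_\phi(B_n(x,\vp)) \asymp e^{S_n\phi(x)-nP(\phi)}$) translates into a bound on $\sum_{\text{bad}} e^{S_n\phi(\zeta)}$ that is exponentially smaller than the total. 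Since $P(\phi) = h_{\mu_\phi}(f) + \int\phi\,d\mu_\phi$ and the total sum $\sum_\zeta e^{S_n\phi(\zeta)}$ grows like $e^{nF_f(\mu_\phi)+n\int\phi\,d\mu_\phi}$, the good part captures the full exponential rate while the bad part is negligible after taking $\frac 1n\log$. Taking $n\to\infty$ first and then $\tau\to 0$ (so that $\eta(\tau)\to 0$) delivers~(\ref{planeq}) and hence the theorem; I would need to check uniformity in $x$ carefully and justify passing the limits inside the integral, most cleanly by a dominated-convergence argument using the uniform finiteness of $\frac 1n\log d_n$ noted in Definition~\ref{dn}.
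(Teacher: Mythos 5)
Your first half---the identity $F_f(\mu_\phi)=\frac 1n\int\log J_{f^n}(\mu_\phi)\,d\mu_\phi$ via the Chain Rule and $f$-invariance, the substitution of Theorem \ref{Jacobian} with its $n$-independent constant, and the splitting of the preimage sum into $G_n(x,\mu_\phi,\tau)$ and its complement---is exactly the paper's route. The gap is in the step you yourself flag as the crux: the bad preimages. Your large-deviations bound gives, via $\mu_\phi(B_n(\zeta,\vp))\asymp e^{S_n\phi(\zeta)-nP(\phi)}$ and exponential decay of the measure of the non-generic set, an estimate of the form $\sum_{\zeta\ \mathrm{bad}}e^{S_n\phi(\zeta)}\lesssim e^{n(P(\phi)-c(\tau))}$ with a rate $c(\tau)$ that tends to $0$ as $\tau\to 0$. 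The good part has size $d_n(x,\mu_\phi,\tau)\,e^{n\int\phi\,d\mu_\phi}$, whose exponential rate (if the theorem is to hold) is $F_f(\mu_\phi)+\int\phi\,d\mu_\phi=P(\phi)-\bigl(h_{\mu_\phi}(f)-F_f(\mu_\phi)\bigr)$. So your comparison makes the bad part negligible only when $c(\tau)>h_{\mu_\phi}(f)-F_f(\mu_\phi)$. But $h_{\mu_\phi}(f)-F_f(\mu_\phi)$ can be a fixed positive number---for a hyperbolic toral endomorphism with $|\det A|=d$ one always has $F_f(\mu)\le\log d$, while $h_{\mu_\phi}(f)$ can be close to $\sum_{|\lambda_i|>1}\log|\lambda_i|>\log d$---whereas $c(\tau)\to 0$ in precisely the limit $\tau\to 0$ that the theorem requires. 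Hence the pointwise domination of the good part over the bad part is not delivered by this estimate (and need not hold pointwise at all).

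The paper closes this gap by an averaged argument rather than a pointwise one. It builds a partition $\{A^n_{ij}\}$ of $\Lambda$ whose pieces sit inside Bowen balls $B_n(\xi_{ij},\vp)$ around the $n$-preimages, uses the comparison (\ref{comparison}) from the proof of Theorem \ref{Jacobian} to replace each weight $e^{S_n\phi(\xi_{ij})}$ by the measure $\mu_\phi(A^n_{ij})$, and then controls the two bad contributions by (i) the crude bound $1\le \sum_y e^{S_n\phi(y)}/e^{S_n\phi(x)}\le d^ne^{2nM}$ integrated over the set of non-generic points, whose measure is $<\eta$ by Birkhoff, yielding $\eta(\log d+2M)$; and (ii) the elementary inequality $\log(1+u)\le u$, which converts the remainder inside the logarithm into the total measure $\sum_i\tilde r_n(z_i,\mu_\phi,\tau)\le\eta$ of the non-generic preimage pieces. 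Only the Birkhoff ergodic theorem is used---no exponential decay rate---and the resulting error $\delta(\tau)+\eta C'$ is uniform in $n$, which is what justifies the iterated limit without any dominated-convergence step. To repair your proof you would need to replace the pointwise comparison of exponential rates by an integrated bound of this kind.
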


\begin{proof}
First let us recall formula (\ref{fold}) for an arbitrary
$f$-invariant measure $\mu$, namely $$F_f(\mu) = \int_\Lambda\log
J_f(\mu)(x) d\mu_(x)$$ From the Chain Rule for Jacobians,
$J_{f^n}(\mu)(x) = J_f(\mu)(x) \ldots J_f(\mu)(f^{n-1}(x))$
$\mu$-a.e, for any $n \ge 1$. On the other hand, since $\mu$ is
$f$-invariant, we have that $$\int \log J_f(\mu)(x) d\mu(x) = \int
\log J_f(\mu)(f(x)) d\mu(x) = \int \log J_f(\mu)(f^k x)d\mu(x),$$
for all $k \ge 1$. These facts imply that for any $n \ge 1$,
\begin{equation}\label{n}
F_f(\mu) = \frac 1n \int \log J_{f^n} (\mu)(x) d\mu(x)
\end{equation}

Therefore from Theorem \ref{Jacobian}, since the constant $C$ is
independent of $n$ we obtain that:
\begin{equation}\label{sigma}
F_f(\mu_\phi) = \mathop{\lim}\limits_{n \to \infty} \frac 1n
\int_\Lambda \log \frac{\mathop{\sum}\limits_{y \in
 f^{-n}(f^n(x))\cap \Lambda} e^{S_n\phi(y)}}{e^{S_n\phi(x)}} d\mu_\phi(x)
\end{equation}

Now since $\Lambda$ is compact, each point $x \in \Lambda$ has only finitely many $f$-preimages in $\Lambda$,
i.e there exists a positive integer $d$ s.t $\text{Card}(f^{-1}x) \le d, x \in \Lambda$.

Since $\mu_\phi$ is an ergodic measure and from Birkhoff Ergodic
Theorem we obtain that
 $\mu_\phi(x \in \Lambda, |\frac{S_n\phi(x)}{n} - \int \phi d\mu| >
 \tau/2)
 \mathop{\to}\limits_{n \to \infty} 0$, for any small $\tau >0$. Thus for any $\eta >0$ there exists a large integer
 $n(\eta)$ s.t for $n \ge n(\eta)$,
\begin{equation}\label{birkhoff}
\mu_\phi(x \in \Lambda, |\frac{S_n\phi(x)}{n} - \int \phi d\mu| >
\tau/2) < \eta
\end{equation}

Let us now take a point $x \in \Lambda$ with $|\frac{S_n
\phi(x)}{n} - \int \phi d\mu| < \tau$. From Definition \ref{dn} we
have
\begin{equation}\label{suma}
\frac{e^{n(\int \phi d\mu_\phi - \tau)} d_n(x, \mu_\phi, \tau) +
r_n(x, \mu_\phi, \tau)}{e^{n(\int \phi d\mu + \tau)}} \le
\frac{\mathop{\sum}\limits_{y \in f^{-n}(f^n x) \cap \Lambda}
e^{S_n\phi(y)}}{e^{S_n\phi(x)}} \le \frac{e^{n(\int \phi d\mu_\phi
+ \tau)} d_n(x, \mu_\phi, \tau) + r_n(x, \mu_\phi,
\tau)}{e^{n(\int \phi d\mu_\phi - \tau)}},
\end{equation}

where $r_n(x, \mu_\phi, \tau)$ is the remainder
$\mathop{\sum}\limits_{y \in f^{-n}f^n(x) \setminus G_n(x,
\mu_\phi, \tau)} e^{S_n\phi(y)}$. \ In order to simplify notation,
we will also denote $r_n(x, \mu_\phi, \tau)$ by $r_n$ when no
confusion can arise.

Given $n$ large, let us consider now a partition $(A_i^n)_{1 \le i
\le K}$ of $\Lambda$ (modulo $\mu_\phi$) so that for
 each $0 \le i \le K$, there exists a point $z_i \in A_i^n$ so that for any $n$-preimage $\xi_{ij} \in f^{-n}(z_i)
 \cap \Lambda, 1 \le j \le d_{n, i}$, we have $A_i^n \subset f^n(B_n(\xi_{ij}, \vp)), 1 \le j \le d_{n, i},  1 \le i \le K$.
For the above partition, let us denote by $A^n_{ij}$ the part
of the $n$-preimage of $A^n_i$ which belongs to the Bowen ball
$B_n(\xi_{ij}, \vp)$, i.e $A^n_{ij}:= f^{-n}(A^n_i) \cap
B_n(\xi_{ij}, \vp), 1 \le j \le d_{n, i}, 1 \le i \le K$. Since
the sets $A^n_i$ were chosen disjoint, also the pieces of their
preimages, namely $A^n_{ij}, i, j$, are mutually disjoint.

We will decompose the integral in (\ref{sigma}) over the sets
$A^n_{ij}$. Notice that if $y, z \in A^n_{ij}$, then since $\phi$
is Holder continuous and $A_{ij}^n \subset B_n(\xi_{ij}, \vp)$, it
follows that we have
\begin{equation}\label{sn-phi}
|S_n \phi(y) - S_n\phi(z)| \le C(\vp),
\end{equation}
 where $C(\vp)$ is a
positive function with $C(\vp) \mathop{\to}\limits_{\vp \to 0} 0$.
So we will obtain now:
\begin{equation}\label{inte}
\int_\Lambda \log \frac{\mathop{\sum}\limits_{y \in f^{-n}f^n x
\cap \Lambda} e^{S_n\phi(y)}}{e^{S_n\phi(x)}} d\mu_\phi(x) =
\mathop{\sum}\limits_{0 \le j \le d_i, 0 \le i \le K}
\int_{A^n_{ij}} \log \frac{\mathop{\sum}\limits_{y \in f^{-n}f^n x
\cap \Lambda }e^{S_n\phi(y)}}{e^{S_n\phi(x)}} d\mu_\phi(x)
\end{equation}

Let us now denote by $R_n(i, \mu_\phi, \tau)$ the set of preimages
$\xi_{ij}$ with $\xi_{ij} \notin G_n(\xi_{ik_0}, \mu_\phi, \tau)$,
and denote simply by $R_{n, i}$ the set of indices $j, 1 \le j \le
d_{n, i} $ with $\xi_{ij} \in R_n(i, \mu_\phi, \tau)$ for every $1
\le i \le K$. Now in the decomposition from (\ref{inte}) we notice
that the integral over those sets $A^n_{ij}$ with $j \in R_{n, i}$
will not matter significantly. Indeed as $\text{Card} (f^{-1}x
\cap \Lambda) \le d, x \in \Lambda$ and since $-M \le \phi(x) \le
M, x \in \Lambda$ we have $$1 \le \frac{\mathop{\sum}\limits_{y
\in f^{-n}f^n x \cap \Lambda}e^{S_n\phi(y)}}{e^{S_n\phi(x)}} \le
d^n e^{2nM}$$

Now recall that each $A^n_{ij} \subset B_n(\xi_{ij}, \vp)$ and the
sets $A^n_{ij}, i, j$ are mutually disjoint (with respect to
$\mu_\phi$). Hence by using inequalities (\ref{birkhoff}) and
(\ref{sn-phi}) and the fact that $\xi_{ij} \notin G_n(\xi_{ik_0},
\mu_\phi, \tau)$ whenever $j \in R_{n, i}$, we obtain:
\begin{equation}\label{neglij}
\mathop{\sum}\limits_{0 \le i \le K, j \in R_{n, i}} \frac 1n
\int_{A^n_{ij}} \log \frac{\mathop{\sum}\limits_{y \in f^{-n}f^n x
\cap \Lambda }e^{S_n\phi(y)}}{e^{S_n\phi(x)}} d\mu_\phi(x) \le
\frac 1n \log (d^n e^{2nM}) \cdot \eta = \eta (\log d + 2M)
\end{equation}

But by using the comparison between different parts of the
$n$-preimage of a small set from the
proof of Theorem \ref{Jacobian} (see (\ref{comparison})), we deduce that the last term of
formula (\ref{inte}) is comparable to
\begin{equation}\label{logi}
 \mathop{\sum}\limits_{i, j}
\mu_\phi(A^n_{ij}) \log \frac{d_n(z_i, \mu_\phi, \tau)
\mu_\phi(A^n_{ij}) + \tilde r_n(z_i, \mu_\phi,
\tau)}{\mu_\phi(A^n_{ij})},
\end{equation}
where $\tilde r_n(z_{i}, \mu, \tau) :=
\mathop{\sum}\limits_{\xi_{ij}\in f^{-n}(z_i)\cap \Lambda, \
\xi_{ij} \notin G_n(\xi_{ik_0}, \mu_\phi, \tau)} \mu_\phi(A^n_{ij})$,

Hence from (\ref{comparison}), (\ref{neglij}) and (\ref{logi}) we obtain:
\begin{equation}\label{m}
\aligned &\frac 1n\mathop{\sum}\limits_{i, j \notin R_{n, i}}
\mu_\phi(A^n_{ij}) \log d_n(z_i, \mu_\phi, \tau) + \frac 1n
\mathop{\sum}\limits_{i, j \notin R_{n, i}} \mu_\phi(A^n_{ij})
\log (1 + \frac{\tilde r_n(z_i, \mu_\phi, \tau)}{d_n(z_i,
\mu_\phi, \tau)\mu_\phi(A^n_{ij})}) - \delta(\tau) -\eta C' \le \\
& \le \int_\Lambda \frac 1n \log \frac{\mathop{\sum}\limits_{y \in
f^{-n}f^n x \cap \Lambda} e^{S_n\phi(y)}}{e^{S_n\phi(x)}} d\mu_\phi(x)  \le
\\ & \le \frac 1n \mathop{\sum}\limits_{i, j \notin R_{n, i}} \mu_\phi(A^n_{ij}) \log d_n(z_i,
\mu_\phi, \tau) + \frac 1n \mathop{\sum}\limits_{i, j \notin R_{n,
i}} \mu_\phi(A^n_{ij}) \log (1 + \frac{\tilde r_n(z_i, \mu_\phi,
\tau)}{d_n(z_i, \mu_\phi, \tau)\mu_\phi(A^n_{ij})}) + \delta(\tau)
+ \eta C',
\endaligned
\end{equation}
with $C' = \log d + 2M$ being the constant found in (\ref{neglij}), and
where the positive constant $\delta(\tau)$ comes from the
uniformly bounded variation of $\frac 1n S_n\phi(x)$ when $x$ is in
$A^n_{ij}$ and when $1\le i \le K, j \notin R_{n, i}$ vary; clearly we have $\delta(\tau)
\mathop{\to}\limits_{\tau \to 0} 0$.

Now we know that in general
$\log(1+x) \le x$, for $x
>0$. Thus $ \log (1 + \frac{\tilde r_n(z_i, \mu_\phi,
\tau)}{d_n(z_i, \mu_\phi, \tau)\mu_\phi(A^n_{ij})}) \le
\frac{\tilde r_n(z_i, \mu_\phi, \tau)}{d_n(z_i, \mu_\phi,
\tau)\mu_\phi(A^n_{ij})}, i, j$ and hence in (\ref{m}) we have, for $n$ large enough that:
\begin{equation}\label{rn}
\aligned &\mathop{\sum}\limits_{i, j \notin R_{n, i}}
\mu_\phi(A^n_{ij}) \log (1 + \frac{\tilde r_n(z_i, \mu_\phi,
\tau)}{d_n(z_i, \mu_\phi, \tau)\mu_\phi(A^n_{ij})}) \le
\mathop{\sum}\limits_{i, j \notin R_{n, i}} \mu_\phi(A^n_{ij})
\frac{\tilde r_n(z_i, \mu_\phi, \tau)}{d_n(z_i, \mu_\phi,
\tau)\mu_\phi(A^n_{ij})} = \\ &= \mathop{\sum}\limits_{1 \le i \le
K} \tilde r_n(z_i, \mu_\phi, \tau) \le \eta,
\endaligned
\end{equation}
 where we used that by definition, there are $d_n(z_i, \mu_\phi, \tau)$ indices $j$ in
$\{1, \ldots, d_{n, i}\} \setminus R_{n, i}$ for any $1 \le i \le
K$. Therefore from the last displayed inequality and from
(\ref{m}) we obtain, for $n \ge n(\eta)$, that:
\begin{equation}\label{f}
\left| \frac 1n \int_\Lambda \log \frac{\mathop{\sum}\limits_{y
\in f^{-n}f^n x \cap \Lambda} e^{S_n\phi(y)}}{e^{S_n\phi(x)}}
d\mu_\phi(x) - \frac 1n \int_\Lambda \log d_n(z, \mu_\phi, \tau)
d\mu_\phi(z) \right| \le \delta(\tau) + \eta,
\end{equation}
where $\delta(\tau) \mathop{\to}\limits_{\tau \to 0} 0$. Then by
taking $n \to \infty$ and $\tau \to 0$, we will obtain the
conclusion of the Theorem from (\ref{sigma}) and (\ref{f}), namely
that $$F_f(\mu_\phi) = \mathop{\lim}\limits_{\tau \to 0}
\mathop{\lim}\limits_{n \to \infty} \frac 1n \int_\Lambda \log
d_n(x, \mu_\phi, \tau) d\mu_\phi(x)$$

\end{proof}

\begin{cor}\label{neg-ent}
 a) Let $f: \mathbb T^m \to \mathbb T^m, m \ge 2$ be a hyperbolic toral endomorphism, and $\phi$ be an arbitrary Holder continuous
potential on $\mathbb T^m$, with its associated equilibrium
measure $\mu_\phi$. Then the entropy production of $\mu_\phi$ is
non-positive, i.e $$e_f(\mu_\phi) \le 0$$ In the same setting the
entropy production of the Haar (Lebesgue) measure is equal to 0.

b) The same conclusions as above hold also for any Anosov endomorphism $f: \mathbb T^m \to \mathbb T^m$ with constant Jacobian with respect to the Riemannian metric, i.e for which $\text{det} Df$ is constant on $\mathbb T^m$.
\end{cor}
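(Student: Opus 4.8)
The plan is to reduce both parts to two elementary facts: in each of these settings the dissipation integral $\int \log|\det Df|\,d\mu$ is pinned at the constant value $\log(\deg f)$, while the folding entropy of \emph{any} invariant probability is bounded above by $\log(\deg f)$, with equality realized by the Haar measure. Once these are in place, $e_f(\mu_\phi) = F_f(\mu_\phi) - \log(\deg f) \le 0$ and $e_f(\lambda) = 0$ follow immediately.

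For the folding entropy bound I would invoke Theorem \ref{folding-ent}. For every $x$, $n$ and $\tau$ one has $d_n(x,\mu_\phi,\tau) \le \text{Card}\big(f^{-n}(f^n x)\cap\Lambda\big) \le (\deg f)^n$, since a point has at most $(\deg f)^n$ preimages under $f^n$. Hence $\frac1n\log d_n(x,\mu_\phi,\tau) \le \log(\deg f)$, and integrating against $\mu_\phi$ and passing to the limits $n\to\infty$, $\tau\to0$ in the formula of Theorem \ref{folding-ent} gives $F_f(\mu_\phi) \le \log(\deg f)$. (This is the same as the elementary estimate $F_f(\mu) = \int H(\mu_x)\,d\mu(x) \le \log(\deg f)$, each fibre $f^{-1}(x)$ carrying at most $\deg f$ points.)

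For part a), a hyperbolic toral endomorphism is $f(x)=Ax \pmod{\mathbb{Z}^m}$ with hyperbolic $A\in M_m(\mathbb{Z})$, so $Df\equiv A$, the quantity $|\det Df|=|\det A|$ is constant, and every point has exactly $|\det A|$ preimages, i.e. $\deg f = |\det A|$. Thus $\int\log|\det Df|\,d\mu_\phi = \log|\det A| = \log(\deg f)$, and with the bound above $e_f(\mu_\phi) = F_f(\mu_\phi)-\log|\det A| \le 0$. For the Haar measure $\lambda$, which is $f$-invariant, the change of variables formula gives the Parry Jacobian $J_f(\lambda)=|\det Df|=|\det A|$, so by (\ref{fold}) I get $F_f(\lambda)=\int\log J_f(\lambda)\,d\lambda = \int\log|\det Df|\,d\lambda$, whence $e_f(\lambda)=0$ with no further computation.

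For part b) the only additional point is to identify the constant Jacobian with the degree. I would use the area formula $\int_{\mathbb{T}^m}(g\circ f)\,|\det Df|\,d\lambda = \int_{\mathbb{T}^m} g(y)\,\text{Card}(f^{-1}(y))\,d\lambda(y) = d\int g\,d\lambda$, valid because $f$ is a local diffeomorphism of degree $d:=\deg f$ with exactly $d$ preimages over each point. Taking $g\equiv1$ yields $|\det Df| = \int|\det Df|\,d\lambda = d$, so $\int\log|\det Df|\,d\mu_\phi = \log d = \log(\deg f)$ and again $e_f(\mu_\phi)\le 0$. For general $g$ the same formula now reads $\int(g\circ f)\,d\lambda = \int g\,d\lambda$, i.e. $\lambda$ is $f$-invariant; since $J_f(\lambda)=|\det Df|=d$, formula (\ref{fold}) gives $F_f(\lambda)=\log d = \int\log|\det Df|\,d\lambda$ and hence $e_f(\lambda)=0$. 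The main obstacle is precisely this step of part b): verifying, via the global area/degree formula rather than the local hyperbolic structure, that a constant Jacobian must equal the topological degree and that Lebesgue is therefore invariant. In part a) these are immediate, and the conceptual core in both cases is simply that folding entropy can never exceed $\log(\deg f)$ while the dissipation term sits exactly at $\log(\deg f)$.
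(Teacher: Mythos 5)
Your proof is correct and follows essentially the same route as the paper: bound $d_n(x,\mu_\phi,\tau)$ by $(\deg f)^n$, feed this into Theorem \ref{folding-ent} to get $F_f(\mu_\phi)\le\log(\deg f)$, and observe that the dissipation integral equals $\log(\deg f)$ because $|\det Df|$ is constant and equal to the number of preimages. The only (harmless) divergence is in the Haar-measure statement, where you compute the Parry Jacobian $J_f(\lambda)=|\det Df|$ directly by change of variables, whereas the paper either argues that $d_n(x,\lambda,\tau)$ equals the full preimage count (Lebesgue being the measure of maximal entropy) or invokes Ruelle's non-negativity of entropy production for absolutely continuous invariant measures; your version is arguably cleaner and also supplies, via the global degree formula, the identification $|\det Df|=\deg f$ in part b) that the paper merely asserts.
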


\begin {proof}

a) In the case of a toral endomorphism $f$ given by the
integer-valued matrix $A$, the determinant of the derivative
$\text{det} Df$ is constant and equal to $\text{det} A$. Thus
$$\int_{\mathbb T^m} \log |\text{det} Df| d\mu_\phi = \log d,$$
where $d := |\text{det} A|$. On the other hand, by looking at the
area of $f(I \times \ldots \times I)$, it is easy to see that $d$
is exactly the number of $f$-preimages that any point from
$\mathbb T^m = I \times \ldots \times I$ ($m$ times) has.
Therefore, by taking $\Lambda = \mathbb T^m$ and by recalling
Definition \ref{dn}, one obtains that $$d_n(x, \mu_\phi, \tau) \le
d^n, \ \forall x \in \mathbb T^m, n >0, \tau>0$$ Hence from Theorem
\ref{folding-ent} it follows that $$e_f(\mu_\phi) \le 0$$ For the
last statement of a), we have that $f$ invariates the Lebesgue
measure $m$, that $|\text{det} Df|$ is constant and equal to $d$
and that $d_n(x, m, \tau)$ is constant in $x$ and equal to $d$
since the Lebesgue (Haar) measure is the unique measure of maximal
entropy. Therefore the entropy production of the Lebesgue measure
$m$ with respect to $f$ is equal to 0.

The last statement of a) can also be obtained from the fact that
the entropy production of invariant absolutely continuous measures
is non-negative (from \cite{Ru-folding}), combined with the first
part of the proof.

b) The argument is the same as for a), namely if $\text{det}Df$ is
constant, then $f$ invariates the Lebesgue measure $m$, and it is
$d$-to-1, for $d = |\text{det} Df|$. Then $d_n(x, \mu_\phi, \tau)
\le d$ for any $x, \tau, n$ and $e_f(\mu_\phi) \le 0$.

\end{proof}

However we will see later that Corollary \ref{neg-ent} is no
longer true for perturbations of a toral endomorphism $f$, and
that there exist equilibrium measures of Holder potentials which
have in certain cases positive entropy production.

Theorem \ref{folding-ent} also helps us calculate the folding
entropy of the measure of maximal entropy for a general hyperbolic
(hence non-expanding) endomorphism. Then by knowing this, one can
calculate the \textbf{entropy production of the measure of maximal
entropy}, from Definition \ref{fe}.

\begin{cor}\label{deg-ent}
In the setting of Theorem \ref{folding-ent}, denote by $\mu_0$ the
unique measure of maximal entropy for $f$ on $\Lambda$. If
$d_n(x)$ denotes the cardinality of $f^{-n}(f^nx) \cap \Lambda$
for $n \ge 1$, then we have: $$F_f(\mu_0) =
\mathop{\lim}\limits_{n \to \infty} \frac 1n \int_\Lambda \log
d_n(x) d\mu_0(x)$$ In particular if $f$ is $d$-to-1 on $\Lambda$,
then $F_f(\mu_0) = \log d$.
\end{cor}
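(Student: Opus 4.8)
The plan is to apply Theorem \ref{folding-ent} to the single choice of potential $\phi \equiv 0$. First I would recall that the measure of maximal entropy $\mu_0$ is exactly the equilibrium state of the zero potential: by the variational principle $P(0) = h_{\text{top}}(f|_\Lambda)$, and the equilibrium measure of $\phi \equiv 0$ maximizes $h_\mu(f) + \int 0 \, d\mu = h_\mu(f)$, with uniqueness guaranteed by the transitivity and hyperbolicity of $f$ on $\Lambda$. Since a constant potential is trivially Holder continuous, Theorem \ref{folding-ent} applies verbatim with $\mu_\phi = \mu_0$, giving
$$F_f(\mu_0) = \mathop{\lim}\limits_{\tau \to 0} \mathop{\lim}\limits_{n \to \infty} \frac 1n \int_\Lambda \log d_n(x, \mu_0, \tau) \, d\mu_0(x).$$

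The key observation is that for $\phi \equiv 0$ the generic selection in Definition \ref{dn} becomes vacuous. Indeed $S_n\phi(y) = 0$ for every $y$ and $\int \phi \, d\mu_0 = 0$, so the defining condition $|\frac{S_n\phi(y)}{n} - \int \phi \, d\mu_0| < \tau$ holds for \emph{every} $n$-preimage $y \in f^{-n}(f^n x) \cap \Lambda$ and every $\tau > 0$. Hence $G_n(x, \mu_0, \tau) = f^{-n}(f^n x) \cap \Lambda$, and consequently $d_n(x, \mu_0, \tau) = d_n(x)$ is independent of $\tau$. The outer limit over $\tau \to 0$ therefore collapses, and the displayed formula reduces directly to the asserted identity
$$F_f(\mu_0) = \mathop{\lim}\limits_{n \to \infty} \frac 1n \int_\Lambda \log d_n(x) \, d\mu_0(x).$$

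For the \emph{in particular} statement, suppose $f$ is $d$-to-$1$ on $\Lambda$, so that every point of $f(\Lambda)$ has exactly $d$ preimages in $\Lambda$. A straightforward induction on the number of iterates then shows $\text{Card}(f^{-n}(z) \cap \Lambda) = d^n$ for every $z \in f^n(\Lambda)$, whence $d_n(x) = d^n$ identically. The integrand $\log d_n(x) = n \log d$ is constant, so dividing by $n$ and integrating against the probability measure $\mu_0$ yields $F_f(\mu_0) = \log d$. There is essentially no analytic obstacle here: the substance of the argument lies entirely in Theorem \ref{folding-ent}, and the only points requiring care are the identification of $\mu_0$ as the equilibrium state of the zero potential and the verification that the $\tau$-selection is trivially satisfied, both of which are immediate in this setting.
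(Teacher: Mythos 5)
Your proposal is correct and is exactly the argument the paper intends (the paper in fact omits the proof of Corollary \ref{deg-ent}, treating it as an immediate specialization of Theorem \ref{folding-ent}): taking $\phi\equiv 0$ makes $\mu_\phi$ the measure of maximal entropy, the genericity condition in Definition \ref{dn} becomes vacuous so that $d_n(x,\mu_0,\tau)=d_n(x)$ and the $\tau$-limit collapses, and the $d$-to-$1$ case follows from $d_n(x)=d^n$. No issues.
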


 Let us now recall the notion of
\textbf{inverse SRB measure}, introduced in \cite{M-JSP}. These
measures exist in the case of hyperbolic repellers (and in
particular in the case of Anosov endomorphisms) and are physically
relevant
 since they describe the \textit{past trajectories} of Lebesgue almost all points in a neighbourhood of the repellor.
 We will show that there exist Anosov endomorphisms, whose respective inverse SRB measures
 have
 \textbf{negative entropy production}.

Let $\Lambda$ be a connected hyperbolic repeller for a smooth
endomorphism $f:M \to M$ defined on a Riemannian manifold $M$, and
assume $f$ has no critical points in $\Lambda$. Let $V$ be a
neighbourhood of $\Lambda$ in $M$ and for any $z \in V$ define the
measures
\begin{equation}\label{mun}
\mu_n^z := \frac 1n \mathop{\sum}\limits_{y \in f^{-n}z \cap V}
\frac {1}{d(f(y)) \ldots d(f^n(y))} \mathop{\sum}\limits_{i=1}^n
\delta_{f^i y},
\end{equation}
where $d(y)$ is the number of $f$-preimages belonging to $V$ of a
point $y \in V$ ($d(\cdot)$ is called also the degree function).

Then we proved in \cite{M-JSP} that there exists an $f$-invariant
measure $\mu^-$ on $\Lambda$, a neighbourhood $V$ of $\Lambda$ and
a borelian set $A \subset V$ with $m(V \setminus A) = 0$ (where
$m$ is the Lebesgue measure on $M$) and a subsequence $n_k \to
\infty$ such that for any $z \in A$,
\begin{equation}\label{converg}
\mu^z_{n_k} \mathop{\to}\limits_{k \to \infty} \mu^-
\end{equation}
 The measure
$\mu^-$ is called the \textbf{inverse SRB measure} of the
hyperbolic repeller. We showed in \cite{M-JSP}  that $\mu^-$ is
the equilibrium measure of the stable potential $\Phi^s(x):= \log
|\text{det} Df_s(x)|, x \in \Lambda$, with respect to $f$ (where
we recall the notation from (\ref{DSU})). The difficulty is that
the map $f$ is non-invertible, hence $\mu^-$ is \textbf{not}
simply the SRB measure for the inverse $f^{-1}$. Moreover from the
hyperbolicity condition in the case of endomorphisms, the unstable
manifolds may intersect each other both in $\Lambda$ and outside
$\Lambda$ and through any point of $\Lambda$ there may pass
infinitely many (even uncountably many, as shown in \cite{M-MZ})
unstable manifolds.

We also proved that this inverse SRB measure $\mu^-$ is the unique $f$-invariant measure $\mu$ satisfying an \textit{inverse Pesin entropy formula:} in the case when $f$ is $d$-to-1 on $\Lambda$
\begin{equation}\label{invP}
h_\mu(f) = \log d - \int_\Lambda \mathop{\sum}\limits_{i, \lambda_i(\mu, x) <0} \lambda_i(\mu, x) m_i(\mu, x) d\mu(x),
\end{equation}
where $\lambda_i(\mu, x)$ are the Lyapunov exponents of the
measure $\mu$ at $x$ and $m_i(\mu, x)$ are the respective
multiplicities of these Lyapunov exponents. In addition if $f$ is
$d$-to-1 on the connected hyperbolic repeller $\Lambda$, then the
inverse SRB measure $\mu^-$ has \textit{absolutely continuous
conditional measures on local stable manifolds} (see
\cite{M-JSP}).

Also for an Anosov endomorphism $f$ on $M$, we know from
\cite{QZ}, \cite{QS} that there exists a unique SRB measure
$\mu^+$ which satisfies a Pesin entropy formula and which is the
projection $\pi_*$ of the equilibrium measure of the unstable
potential $\Phi^u(\hat x):= -\log |\text{det} Df_u(\hat x)|, \hat
x \in \hat M$ (with the notation for the unstable derivative from
(\ref{DSU})).

We prove now that the entropy production of the respective inverse SRB
measure of a perturbation $g$ of a hyperbolic toral endomorphism, is less than
or equal to 0; we identify the cases when it is 0 as exactly
those cases when $\mu_g^-$ is absolutely continuous on $\mathbb
T^m$.

\begin{thm}\label{ent-tor}
Let $f$ be a hyperbolic toral endomorphism on $\mathbb T^m, m \ge
2$ given by an integer-valued matrix $A$ without zero eigenvalues,
and let $g$ be a $\mathcal{C}^1$ perturbation of $f$. Consider
$\mu_g^-$ the inverse SRB measure of $g$ and $\mu_g^+$ the (usual
forward) SRB measure. Then:

a)  $e_g(\mu^-_g) \le 0$ and $F_g(\mu_g^-) = \log d$. Moreover
$e_g(\mu^+_g) \ge 0$.

b) $e_g(\mu^-_g) = 0$ if and only if $|\text{det} Dg|$ is
cohomologous to a constant on $\mathbb T^m$. Same condition on
$|\text{det} Dg|$  holds if and only if $e_g(\mu_g^+) = 0$. In
either case we obtain $\mu_g^- = \mu_g^+$, and the common value is
absolutely continuous with respect to the Lebesgue measure on
$\mathbb T^m$.
\end{thm}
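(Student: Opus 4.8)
The plan is to treat the two entropy productions separately, after recording that a $\mathcal{C}^1$ perturbation $g$ of the hyperbolic toral endomorphism $f$ remains Anosov (by structural stability of hyperbolic endomorphisms on the inverse limit) and, since $A$ has no zero eigenvalues, stays a local diffeomorphism and a degree-$d$ covering with $d=|\det A|$; hence $g$ is exactly $d$-to-$1$, the number of preimages being a homotopy invariant. Thus $\mu_g^-$ (the equilibrium state of $\Phi^s=\log|\det Dg_s|$) and $\mu_g^+$ (the projection of the equilibrium state of $\Phi^u$) are well defined. Throughout I would use $\log|\det Dg|=\Phi^s-\Phi^u$ together with three inputs: Ruelle's inequality $h_\mu(g)\le\int\log|\det Dg_u|\,d\mu$, the inverse Pesin formula (\ref{invP}), and Ruelle's result that any measure arising as a limit of the averages (\ref{rho-n}) has non-negative entropy production.

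For part a) I would first establish $F_g(\mu_g^-)=\log d$. The upper bound is immediate from Theorem \ref{folding-ent}, since $d_n(x,\mu_g^-,\tau)\le\text{Card}(g^{-n}(g^nx))=d^n$ forces $F_g(\mu_g^-)\le\log d$. For the matching lower bound I would use the construction (\ref{mun}): in the constant-degree case all weights $1/(d(g(y))\cdots d(g^n(y)))$ equal $d^{-n}$, so $\mu_n^z$ is exactly the uniform average over the $d^n$ preimage orbits of $z$, and $\mu_n^z\to\mu_g^-$. This pushes the bulk of the $d^n$ preimages $y$ of a generic point to have $\frac1n S_n\Phi^s(y)$ close to $\int\Phi^s\,d\mu_g^-$, so $d_n(x,\mu_g^-,\tau)$ is a subexponentially large fraction of $d^n$ and $F_g(\mu_g^-)\ge\log d$. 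Granting $F_g(\mu_g^-)=\log d$, the sign of the entropy production follows by combining the inverse Pesin formula $\int\Phi^s\,d\mu_g^-=\log d-h_{\mu_g^-}(g)$ with Ruelle's inequality $\int\log|\det Dg_u|\,d\mu_g^-\ge h_{\mu_g^-}(g)$: adding them gives $\int\log|\det Dg|\,d\mu_g^-\ge\log d$, whence $e_g(\mu_g^-)=\log d-\int\log|\det Dg|\,d\mu_g^-\le 0$. The inequality $e_g(\mu_g^+)\ge 0$ is then a direct application of the quoted Ruelle result, since $\mu_g^+$ is the limit of $\frac1n\sum_{k=0}^{n-1}g^k_*(\mathrm{Leb})$, i.e. of the form (\ref{rho-n}).

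For part b) the engine is the equality case of the estimates in a). I would show that $e_g(\mu_g^-)=0$ holds precisely when Ruelle's inequality for $\mu_g^-$ is an equality, i.e. when $\mu_g^-$ satisfies Pesin's formula and so, by the Ledrappier--Young characterization in its endomorphism form, has absolutely continuous conditional measures on unstable manifolds. Since $\mu_g^-$ already has absolutely continuous conditionals on stable manifolds, absolute continuity along both foliations upgrades to genuine absolute continuity of $\mu_g^-$ with respect to Lebesgue on $\mathbb{T}^m$; uniqueness of the absolutely continuous invariant measure then gives $\mu_g^-=\mu_g^+$. Writing $\mu_g^-=\rho\,d(\mathrm{Leb})$ and using the uniform fibre conditionals $J_g(\mu_g^-)=d$ with $J_g(\mathrm{Leb})=|\det Dg|$, the Parry chain rule yields $|\det Dg|=d\,\rho/(\rho\circ g)$, i.e. $\log|\det Dg|$ is cohomologous to the constant $\log d$. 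Conversely, if $\log|\det Dg|=c+v\circ g-v$, then $\int\log|\det Dg|\,d\mu=c$ for every invariant $\mu$; inserting this into the two-sided bound $\int\log|\det Dg|\,d\mu_g^+\le\log d\le\int\log|\det Dg|\,d\mu_g^-$ (the left inequality from $F_g(\mu_g^+)\le\log d$ and $e_g(\mu_g^+)\ge 0$) forces $c=\log d$ and collapses both inequalities, giving $e_g(\mu_g^-)=e_g(\mu_g^+)=0$; Ruelle's absolute-continuity criterion then makes both measures absolutely continuous, hence equal. The same chain shows $e_g(\mu_g^+)=0$ is equivalent to the identical cohomological condition.

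The main obstacle I anticipate is the exact value $F_g(\mu_g^-)=\log d$: the convergence $\mu_n^z\to\mu_g^-$ only controls the \emph{mean} of the preimage Birkhoff averages, whereas the lower bound needs genuine concentration, namely that most of the $d^n$ preimages, not merely their average, are $\Phi^s$-generic for $\mu_g^-$. The second delicate point is importing the Ledrappier--Young description of equality in Ruelle's inequality into the non-invertible setting and verifying that absolute continuity of the conditionals along both the stable and unstable directions implies absolute continuity on $\mathbb{T}^m$; the cohomological bookkeeping at the end is then routine.
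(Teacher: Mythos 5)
Your overall architecture is the paper's: combine the inverse Pesin formula (\ref{invP}) with Ruelle's inequality to get $e_g(\mu_g^-)\le 0$, use a Pesin-type statement for $\mu_g^+$ to get $e_g(\mu_g^+)\ge 0$, and in part b) pass from $e_g(\mu_g^-)=0$ to equality in Pesin's formula, hence $\mu_g^-=\mu_g^+$, hence the cohomological condition and absolute continuity from having absolutely continuous conditionals on both stable and unstable foliations. Your variations there are harmless: for $e_g(\mu_g^+)\ge 0$ you invoke Ruelle's limit-of-averages theorem (valid, since the SRB measure of an Anosov endomorphism is such a limit) where the paper instead uses the inequality $h_{\mu}(g)\le F_g(\mu)-\sum_{\lambda_i<0}\lambda_i$ from \cite{PDL} together with Pesin's formula; and you extract the cohomology from the density via the Parry chain rule where the paper applies the Livshitz theorem to the identity $\Phi^s-\Phi^u=\log|\text{det}Dg|$ and the coincidence of the two equilibrium states.

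The genuine gap is exactly the one you flag: the lower bound in $F_g(\mu_g^-)=\log d$. Your route through Theorem \ref{folding-ent} requires that for most base points $x$ an overwhelming (sub-exponentially deficient) proportion of the $d^n$ points of $g^{-n}(g^nx)$ have $\frac 1n S_n\Phi^s$ within $\tau$ of $\int\Phi^s d\mu_g^-$. The weak convergence $\mu^z_{n_k}\to\mu^-$ of the equal-weight averages (\ref{mun}) only gives convergence of the \emph{mean} over preimages of the Birkhoff averages of continuous observables; it is consistent with a fixed positive fraction of preimages being non-generic (e.g. half deviating by $+c$ and half by $-c$), so no concentration statement follows, and $\mathop{\lim}\frac 1n\int\log d_n\,d\mu_g^-$ could a priori be strictly less than $\log d$. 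Note also that the exact value $F_g(\mu_g^-)=\log d$ is indispensable downstream: the trivial bound $F_g\le\log d$ suffices for $e_g(\mu_g^-)\le 0$, but part b) needs the equality, since $e_g(\mu_g^-)=\bigl(\log d-\int\Phi^s d\mu_g^-\bigr)-\int\log|\text{det}Dg_u|\,d\mu_g^-$ identifies the vanishing of $e_g(\mu_g^-)$ with equality in Ruelle's inequality only when $F_g(\mu_g^-)$ is exactly $\log d$. The paper sidesteps the concentration issue entirely: because the degree function is identically $d$, the weights in (\ref{mun}) are all $d^{-n}$, and the construction of $\mu^-$ in \cite{M-JSP} yields uniform conditional measures (mass $1/d$ on each preimage) on the fibers $g^{-1}(x)$, i.e. $J_g(\mu_g^-)\equiv d$ $\mu_g^-$-a.e.; then $F_g(\mu_g^-)=\int\log J_g(\mu_g^-)\,d\mu_g^-=\log d$ directly from (\ref{fold}), with no appeal to Theorem \ref{folding-ent}. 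You should replace your concentration argument by this Jacobian computation (or supply an actual large-deviations estimate for the preimage averages, which is substantially more work).
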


\begin{proof}

a) If $f$ is given by an integer valued matrix $A$, then $f$ is
$d$-to-1 on $\mathbb T^m$, where $d= |\text{det} A|$. If $g$ is a
$\mathcal{C}^1$ perturbation of the hyperbolic toral endomorphism
$f$, then it is clear that
 $g$ is also hyperbolic on $\mathbb T^m$. Thus from \cite{QZ} we can construct the SRB measure of $g$, denoted by
 $\mu_g^+$, which is the projection by $\pi_*$ of the equilibrium measure of
 $\Phi^u_g(\hat x) = - \log |\text{det} Dg_u(\hat x)|, \hat x  \in  \hat{\mathbb T^m}$. In particular $\mu_g^+$ is ergodic,
 hence its Lyapunov exponents are constant $\mu_g^+$-a.e.

From the discussion above, since $f$ has no critical points, we
can construct the inverse SRB measure $\mu_g^-$ which is the
equilibrium measure of the stable potential $\Phi^s_g(x) = \log
|\text{det}Dg_s(x)|, x \in \mathbb T^m$; thus $\mu_g^-$ is ergodic
too, and its Lyapunov exponents are constant $\mu_g^-$-a.e on
$\mathbb T^m$.

Now since $g$ is a perturbation of $f$, it follows that every
point in $\mathbb T^m$ has exactly $d$ $g$-preimages, where $d =
|\text{det} A|$. Thus from \cite{M-JSP}, it follows that $\mu^-_g$
is the weak limit of a sequence of measures of type (\ref{mun}),
where the degree function $d(\cdot)$ is constant and equal to $d$
everywhere on $\mathbb T^m$. This implies then that the Jacobian
of $\mu_g^-$ is constant and equal to $d$, since for any small
borelian set $B$, we have that a point $x \in g(B)$ if and only if
there is exactly one $g$-preimage $x_{-1}$ of $x$ in $B$, and we
use this fact in the above convergence (\ref{converg}) of measures
towards $\mu^-$.  Hence $$F_g(\mu_g^-) = \int \log
J_{g}(\mu_g^-)(x) d\mu_g^-(x) = \log d$$ And from (\ref{invP}) we
have that $$ h_{\mu_g^-}(g) = \log d -
\mathop{\sum}\limits_{\lambda_i(\mu_g^-) <0} \lambda_i(\mu_g^-) $$
Thus if $e_g(\mu_g^-) >0$,  it would follow that $F_g(\mu_g^-)
> \int \log |\text{det} Dg| d\mu_g^- = \frac 1n \int \log
|\text{det} Dg^n| d\mu_g^-, n \ge 1$. Hence from the last
displayed formula and Birkhoff Ergodic Theorem, we obtain
$h_{\mu_g^-}(g)
> \mathop{\sum}\limits_{\lambda_i(\mu_g^-) > 0}
\lambda_i(\mu_g^-)$, which gives a contradiction with Ruelle's
inequality. Therefore we have for any perturbation $g$, $$
e_g(\mu_g^-) \le 0 $$ Now for the SRB measure $\mu_g^+$: if the
entropy production $e_g(\mu_g^+)$ were strictly negative, then
$F_g(\mu_g^+) < \int \log|\text{det} Dg| d\mu_g^+$.  Since from
\cite{PDL}, $h_{\mu_g^+}(g) \le F_g(\mu^+_g) -
\mathop{\sum}\limits_{\lambda_i(\mu_g^+) <0} \lambda_i(\mu^+_g)$,
it would follow that $h_{\mu_g^+}(g) <
\mathop{\sum}\limits_{\lambda_i(\mu_g^+) >0} \lambda_i(\mu_g^+)$,
which is a contradiction to the fact that the SRB measure
satisfies Pesin entropy formula. Consequently, $$ e_g(\mu_g^+) \ge
0$$

\

b) If $e_g(\mu_g^-) = 0$, then $F_g(\mu_g^-) = \int \log
|\text{det} Dg| d\mu_g^-$; hence from the Birkhoff Ergodic Theorem
and \cite{PDL} we obtain: $$h_{\mu_g^-}(g) = \int \log |\text{det}
Dg| d\mu_g^- - \mathop{\sum}\limits_{\lambda_i(\mu_g^-)<0}
\lambda_i(\mu_g^-) = \mathop{\sum}\limits_{\lambda_i(\mu_g^- >0}
\lambda_i(\mu_g^-)$$ Therefore from the uniqueness of the
$g$-invariant measure satisfying Pesin entropy formula, we obtain
that $\mu_g^- = \mu_g^+$.

Recalling from above that $\mu_g^-$ is the equilibrium measure of
the stable potential $\Phi^s$ and $\mu_g^+$ is the equilibrium
measure of the unstable potential $\Phi^u$, we see from Livshitz
Theorem (see \cite{KH}), that $\mu_g^- = \mu_g^+$ if and only if
$\text{det} Dg$ is cohomologous to a constant.

Assume now that $\mu_g^+ = \mu_g^-$; then since $\mu_g^+$ has
absolutely continuous conditional measures associated to a
partition subordinated to local unstable manifolds (\cite{QZ},
\cite{QS}) and $\mu_g^-$ has absolutely continuous conditional
measures associated to a partition subordinated to local stable
manifolds (from \cite{M-JSP}), we obtain that $\mu_g^+$ is absolutely
continuous with respect to the Lebesgue measure on $\mathbb T^m$.

\end{proof}

\begin{cor}\label{ruelle}
In the setting of Theorem \ref{ent-tor}, let $g$ be a perturbation of the hyperbolic toral endomorphism $f$ s.t
 $|\text{det} Dg|$ is not cohomologous to a constant. Then its unique inverse SRB measure $\mu_g^-$ is not a weak
 limit of a sequence of type (\ref{rho-n}).
 \end{cor}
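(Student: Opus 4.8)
The plan is to derive a contradiction from the sign of the entropy production of $\mu_g^-$. First I would extract the precise sign from Theorem \ref{ent-tor}: part a) already gives $e_g(\mu_g^-) \le 0$ for every $\mathcal{C}^1$ perturbation $g$ of $f$, while part b) identifies the equality case $e_g(\mu_g^-) = 0$ with the condition that $|\text{det} Dg|$ be cohomologous to a constant on $\mathbb{T}^m$. Since by hypothesis $|\text{det} Dg|$ is \emph{not} cohomologous to a constant, these two statements combine to yield the strict inequality
$$e_g(\mu_g^-) < 0.$$

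Next I would recall Ruelle's non-negativity result quoted in the Introduction (\cite{Ru-folding}): any $g$-invariant measure $\mu$ arising as a weak limit of Cesàro averages of push-forwards $\frac 1n \sum_{k=0}^{n-1} g^k \rho$ of an absolutely continuous probability $\rho$, i.e.\ a limit of the form (\ref{rho-n}), necessarily has non-negative entropy production, $e_g(\mu) \ge 0$. The argument is then immediate: suppose, for contradiction, that $\mu_g^-$ were such a weak limit. Ruelle's result would force $e_g(\mu_g^-) \ge 0$, contradicting the strict inequality $e_g(\mu_g^-) < 0$ established above. Hence $\mu_g^-$ cannot be a weak limit of a sequence of type (\ref{rho-n}), which is exactly the assertion of the Corollary.

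The only point that genuinely requires care — and is the real content of the argument — is the passage from $e_g(\mu_g^-) \le 0$ to the \emph{strict} negativity $e_g(\mu_g^-) < 0$. This strictness is precisely what the non-cohomology hypothesis buys, through the characterization of the equality case in Theorem \ref{ent-tor} b); without it the measure $\mu_g^-$ could coincide with the absolutely continuous forward SRB measure $\mu_g^+$ and the obstruction would vanish. Everything else is a direct citation of previously established facts (Theorem \ref{ent-tor} and Ruelle's inequality from \cite{Ru-folding}), so no further estimates are needed.
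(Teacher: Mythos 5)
Your proposal is correct and follows exactly the paper's own argument: combine Theorem \ref{ent-tor} a) and the equality characterization in b) to get $e_g(\mu_g^-)<0$ under the non-cohomology hypothesis, then invoke Ruelle's non-negativity of entropy production for weak limits of type (\ref{rho-n}) to conclude. Your added remark correctly identifies the passage from $\le 0$ to $<0$ as the point where the hypothesis is used, which the paper leaves implicit.
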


\begin{proof}
As was proved in \cite{Ru-folding}, the entropy production of any limit of measures of type (\ref{rho-n}) is nonnegative.
On the other hand, if $|\text{det}Dg|$ is not cohomologhous to a constant, then $e_g(\mu_g^-) <0$.
Thus in our case $\mu_g^-$ is not a  weak limit of measures of type (\ref{rho-n}).
\end{proof}

We show now that the set of maps with negative entropy production
for their respective inverse SRB measures, is \textbf{open and
dense} in a neighbourhood of a hyperbolic toral endomorphism $f$.

\begin{cor}\label{exp}
a) Let $f$ be a hyperbolic toral endomorphism on $\mathbb T^m, m
\ge 2$. Then there exists a neighbourhood $V$ of $f$ in
$\mathcal{C}^1(\mathbb T^m, \mathbb T^m)$ and a set $W \subset V$
such that $W$ is open and dense in the $\mathcal{C}^1$ topology in
$V$ and s.t for any $g \in W$ we have $e_g(\mu_g^-) < 0$.

b) Consider the hyperbolic toral endomorphism on $\mathbb T^2$ given
by  $f(x, y) = (2x+2y, 2x +3y) \ (\text{mod} \ 1)$ and its smooth
perturbation $$g(x, y) = (2x+2y+\vp sin 2\pi y, \ 2x+3y+ 2\vp sin
2\pi y) \ (\text{mod} \ 1)$$ Then the inverse SRB measure of $g$
has negative entropy production, while the SRB measure of $g$ has
positive entropy production, i.e $$ e_g(\mu_g^-) <0 \ \text{and} \
e_g(\mu_g^+) >0$$
\end{cor}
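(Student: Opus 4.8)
The plan is to reduce both parts to the dichotomy already established in Theorem \ref{ent-tor}b): for a $\mathcal{C}^1$ perturbation $g$ of $f$ one has $e_g(\mu_g^-)<0$ precisely when $|\det Dg|$ is \emph{not} cohomologous to a constant, while $e_g(\mu_g^-)=0$ (equivalently $e_g(\mu_g^+)=0$) exactly when it is. Accordingly I set
\[
W:=\{g\in V:\ |\det Dg|\ \text{is not cohomologous to a constant on}\ \mathbb T^m\},
\]
so that by Theorem \ref{ent-tor}b) we have $W=\{g\in V:\ e_g(\mu_g^-)<0\}$, and part a) becomes the assertion that $W$ is open and dense in a small $\mathcal{C}^1$-neighbourhood $V$ of $f$. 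Throughout I use the Livshitz criterion already invoked in the proof of Theorem \ref{ent-tor}: since $g$ is transitive and Anosov on $\mathbb T^m$, $\log|\det Dg|$ is cohomologous to a constant $c$ if and only if every $g$-periodic orbit has Birkhoff average of $\log|\det Dg|$ equal to $c$; equivalently $g\in W$ iff there exist two periodic orbits with distinct such averages.

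For \textbf{openness}, suppose $g\in W$ and fix periodic points $p,q$ of periods $k,l$ with $\frac1k\log|\det Dg^k(p)|\neq\frac1l\log|\det Dg^l(q)|$. Because $g$ is hyperbolic, each periodic point is a nondegenerate fixed point of the corresponding iterate ($Dg^n$ has no eigenvalue on the unit circle, as already used in the proof of Theorem \ref{Jacobian}), so $p,q$ persist under $\mathcal{C}^1$-small perturbations and depend continuously on the map, and so do the two averages. The strict inequality then survives on a $\mathcal{C}^1$-neighbourhood of $g$, proving $W$ open. For \textbf{density}, take $g\in V\setminus W$, so all periodic averages of $\log|\det Dg|$ equal a common $c$. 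Choose two disjoint periodic orbits $O_1,O_2$ and a $\mathcal{C}^1$-small perturbation $g'$ of $g$ supported in a small ball around a single point $p_0\in O_1$, taken disjoint from $O_1\setminus\{p_0\}$ and from $O_2$, which fixes every point of $O_1\cup O_2$ but alters $Dg'(p_0)$ (for instance by adding a vector field vanishing at $p_0$ with nonzero derivative there). Then $O_1,O_2$ remain periodic orbits, the average over $O_2$ is still $c$, while the average over $O_1$ is shifted off $c$; hence $g'\in W$ lies arbitrarily $\mathcal{C}^1$-close to $g$, giving density and completing a).

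For \textbf{part b)} I compute directly. A short calculation gives $\det Dg(x,y)=2+4\pi\varepsilon\cos 2\pi y$, which is positive for $|\varepsilon|<\frac{1}{2\pi}$, so $g$ is a hyperbolic perturbation of $f$ with no critical points and $|\det Dg|=2+4\pi\varepsilon\cos 2\pi y$, and Theorem \ref{ent-tor} applies. Since $\sin 2\pi\cdot 0=\sin\pi=0$, both $(0,0)$ and $(0,\tfrac12)$ are \emph{exact} fixed points of $g$ for every $\varepsilon$, and there
\[
\log|\det Dg(0,0)|=\log(2+4\pi\varepsilon)\neq\log(2-4\pi\varepsilon)=\log|\det Dg(0,\tfrac12)|
\]
whenever $0<\varepsilon<\frac{1}{2\pi}$. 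By the Livshitz criterion these two fixed points witness that $|\det Dg|$ is not cohomologous to a constant, so Theorem \ref{ent-tor}b) (together with the inequalities in Theorem \ref{ent-tor}a)) immediately yields $e_g(\mu_g^-)<0$ and $e_g(\mu_g^+)>0$.

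The main obstacle is the \textbf{density} step of a): one must produce the localized perturbation decoupling the two comparison orbits while keeping $g'$ inside $V$ (hence still hyperbolic) and genuinely altering $\det Dg'$ only along $O_1$. Openness is comparatively routine, being just persistence and continuity of hyperbolic periodic data, and part b) is an explicit computation once Theorem \ref{ent-tor} is available.
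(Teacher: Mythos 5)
Your proof is correct and follows essentially the same route as the paper: both parts are reduced via Theorem \ref{ent-tor} and the Livshitz periodic-orbit criterion to whether $|\det Dg|$ is cohomologous to a constant, and part b) is verified with the same two fixed points $(0,0)$ and $(0,\tfrac12)$ and the same computation $\det Dg = 2+4\pi\vp\cos 2\pi y$. The only difference is that the paper simply asserts that the set of $g$ violating the Livshitz equalities is open and dense in $V$, whereas you supply the (correct) argument via persistence of hyperbolic periodic orbits and a localized perturbation vanishing at a periodic point but altering the derivative there.
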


\begin{proof}
a) If $f$ is a hyperbolic toral endomorphism on $\mathbb T^m$ then there exists a neighbourhood $V$ of $f$
in $\mathcal{C}^1$ topology, so that any $g\in V$ is hyperbolic and $d$-to-1, where $d = |\text{det} Df|$.

We showed in Theorem \ref{ent-tor} that $e_g(\mu_g^-) <0$ unless
$|\text{det}Dg|$ is cohomologous to a constant. But from the
Livshitz Theorem (see for instance \cite{KH}) it follows that this
is equivalent to the existence of a constant $c$ such that for any
$n \ge 1$, $$S_n(|\text{det}Dg|)(x) = nc, \ \forall x \in
\text{Fix}(g^n)$$

As the set of $g$'s \textit{not satisfying} the above equalities
is open and dense in $V$, we obtain the conclusion of part a).

b) First of all we notice that $f$ is given by an integer valued
matrix $A$ which has one eigenvalue larger than 1 and another
eigenvalue in $(0, 1)$, so $f$ is hyperbolic. Thus for $\vp>0$
small enough, we have that $g$ (which is well defined as an
endomorphism on $\mathbb T^m$) is hyperbolic as well.

We calculate now the determinant of the derivative of $g$ as
$$\text{det} Dg (x, y) = 2 + 4\pi \vp cos 2 \pi y$$ Now, from
Theorem \ref{ent-tor} we see that $e_g(\mu_g^-) <0$ if and only if
the function $|\text{det} Dg|$ is cohomologous to a constant. But
this is equivalent from the Livshitz conditions (\cite{KH}) to the
fact that there exists a constant $c$ such that
$$S_n(|\text{det}Dg|)(x) = nc, \ x \in \text{Fix}(g^n), n \ge 1$$

In our case, notice that both $(0, 0)$ and $(0, \frac 12)$ are
fixed points for $g$. But $|\text{det}Dg(0, 0)| = 2+4 \pi \vp$,
whereas $|\text{det}Dg(0, \frac 12)| = 2 - 4 \pi \vp$. So the
Livshitz condition above is not satisfied, and $|\text{det}Dg|$ is
not cohomologous to a constant. Hence according to Theorem
\ref{ent-tor} we obtain $$e_g(\mu_g^-) < 0 \ \text{and} \
e_g(\mu_g^+) >0$$

\end{proof}

\textbf{Acknowledgements:} This work was supported by CNCSIS - UEFISCDI, project PNII - IDEI 1191/2008.

\

\textbf{E-mail:}  Eugen.Mihailescu\@@imar.ro

Institute of Mathematics of the Romanian Academy, P. O. Box 1-764,
RO 014700, Bucharest, Romania.

Webpage: www.imar.ro/$\sim$mihailes

\end{document}